\documentclass{article}

\usepackage{graphicx}
\usepackage{indentfirst}
\usepackage{amsmath,amsfonts,amsthm,amssymb,amsrefs}
\usepackage{mathrsfs,stmaryrd}
\usepackage{xcolor}
\usepackage[all]{xy}
\usepackage{hyperref}

\usepackage{relsize}

\DeclareMathAlphabet{\mathsfsl}{OT1}{cmss}{m}{sl}

\newtheorem{thm}{Theorem}[section]
\newtheorem{lem}[thm]{Lemma}
\newtheorem{cor}[thm]{Corollary}
\newtheorem{prop}[thm]{Proposition}

\theoremstyle{definition}
\newtheorem{defn}[thm]{Definition}

\newtheorem{rem}[thm]{Remark}

\begin{document}

\title{Knot Floer homology and fixed points}

\author{{\Large Yi NI}\\{\normalsize Department of Mathematics, Caltech, MC 253-37}\\
{\normalsize 1200 E California Blvd, Pasadena, CA
91125}\\{\small\it Emai\/l\/:\quad\rm yini@caltech.edu}}

\date{}
\maketitle

\begin{abstract}
If $K$ is a fibered knot in a closed, oriented $3$--manifold $Y$ with fiber $F$, and $\widehat{HFK}(Y,K,[F], g(F)-1;\mathbb Z/2\mathbb Z)$ has rank $r$, then the monodromy of $K$ is freely isotopic to a diffeomorphism with at most $r-1$ fixed points.
This generalizes earlier work of Baldwin--Hu--Sivek and Ni.

We also clarify a misleading formula in Cotton-Clay's computation of the symplectic Floer homology of mapping classes of surfaces.
\end{abstract}

\section{Introduction}

Knot Floer homology, defined by Ozsv\'ath--Szab\'o \cite{OSzKnot} and Rasmussen \cite{RasThesis}, is a powerful knot invariant with many applications. One important property of knot Floer homology is that the Seifert genus $g(K)$ of a knot $K\subset S^3$ can be read from the knot Floer homology \cite{OSzGenus}:
\[
g(K)=\max\{a|\:\widehat{HFK}(S^3,K,a)\ne0\}.
\]
It is also known that $K$ is fibered if and only if $\widehat{HFK}(S^3,K,g(K))\cong\mathbb Z$ \cite{Gh,NiFibred}. 

It is an interesting question to ask what topological property about $K$ can be deduced from other summands of $\widehat{HFK}(S^3,K)$. In \cite{BHS}, Baldwin--Hu--Sivek proved that if a hyperbolic knot $K\subset S^3$ has the same knot Floer homology as the torus knot $T(5,2)$, then the monodromy of $K$ is freely isotopic to a pseudo-Anosov map without fixed point. It is observed in \cite{NiFPFree} that their argument can be applied to hyperbolic fibered knot in any closed $3$--manifold. In fact, the argument in \cite{NiFPFree} implies the following upper bound on the number of fixed points.

\begin{thm}\label{thm:FPhyp}
Let $Y$ be a closed, oriented $3$--manifold, and $K\subset Y$ be a hyperbolic fibered knot with
fiber $F$ and monodromy $\varphi$. If \[\mathrm{rank}\widehat{HFK}(Y,K,[F],g(F)-1)=r,\] then $\varphi$ is freely isotopic to a pseudo-Anosov map with at most $r-1$ fixed points. 
\end{thm}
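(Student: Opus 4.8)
The plan is to convert the fixed-point count for the monodromy into a computation of the fixed-point (symplectic) Floer homology of a pseudo-Anosov representative, and then to identify that Floer homology with the next-to-top summand of knot Floer homology.

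First I would use the hyperbolicity hypothesis: by Thurston's theory the complement $Y\setminus K$ fibers over the circle with hyperbolic total space, so the monodromy $\varphi$ is freely isotopic to a pseudo-Anosov homeomorphism $\psi$ of the fiber $F$. The problem then becomes bounding the number of fixed points of $\psi$. I would put $\psi$ into a standard area-preserving form, so that its fixed points are isolated and of the two familiar kinds---the regular interior fixed points and the fixed points carried by the separatrices of the singularities---and so that they generate a fixed-point Floer chain complex $CF_*(\psi)$.

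The heart of the argument is to show that these fixed points cannot cancel, i.e.\ that the number of fixed points of $\psi$ is controlled from above by $\mathrm{rank}\,HF_*(\psi)$. For a pseudo-Anosov map the Lefschetz indices of the generators are rigidly constrained, and the Floer differential respects a grading that forbids the relevant cancellations; the delicate point is the bookkeeping at the singular (pronged) fixed points, where several local generators are clustered. This is precisely the place where Cotton-Clay's computation must be read carefully, and correcting the misleading formula there is what I expect to be the main obstacle: the count of the singular contributions, together with the associated index computation, has to be done exactly in order to obtain the sharp constant rather than an off-by-one bound.

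Finally I would invoke the correspondence between the knot Floer homology of a fibered knot and the Floer homology of its monodromy. Over $\mathbb Z/2\mathbb Z$ the next-to-top summand $\widehat{HFK}(Y,K,[F],g(F)-1)$ should be identified with the fixed-point Floer homology of $\psi$, where the rank-$1$ top summand $\widehat{HFK}(Y,K,[F],g(F))$ guaranteed by fiberedness accounts for the distinguished boundary (binding) contribution and produces the shift by one. Hence $\mathrm{rank}\,HF_*(\psi)=r-1$, and combining this with the fixed-point bound of the previous step yields that $\psi$, and therefore the free isotopy class of $\varphi$, has at most $r-1$ fixed points. Throughout I would keep $\mathbb Z/2\mathbb Z$ coefficients so that all of these identifications are literal isomorphisms rather than merely rank inequalities.
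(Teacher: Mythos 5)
There is a genuine gap, and it sits exactly at the step you describe as ``invoking'' a correspondence. The identification of $\widehat{HFK}(Y,K,[F],g(F)-1)$ with the fixed point Floer homology of the monodromy of the \emph{open} page is not an available tool: it is precisely the content of the unpublished work of Ghiggini and Spano cited in this paper as an alternative route, and it cannot simply be quoted, let alone with the rank bookkeeping you propose (``the top summand accounts for the binding, producing the shift by one'' is a heuristic, not an argument; the actual rank drop in this circle of ideas comes from homological algebra on $CFK^{\infty}$ and depends on veering hypotheses and on $HF^+(Y,\mathfrak s)$, as in Proposition~\ref{prop:ZeroSurg}). Moreover, fixed point Floer homology for a surface with boundary is not even well defined until one fixes a boundary rotation convention, and the published invariance and computation results (Seidel, Gautschi, Cotton-Clay, Lee--Taubes) are for closed surfaces. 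This is exactly why the argument behind Theorem~\ref{thm:FPhyp} (from \cite{NiFPFree}, following \cite{BHS}) does not work on the page directly: it passes to the zero surgery $Y_0(K)$, whose fiber $\widehat F$ is closed, computes $HF^+(Y_0(K),\mathfrak t_{g-2})$ from knot Floer homology via the integer surgery formula and exact triangle, identifies this group with $HF^{\mathrm{symp}}$ of the capped-off monodromy through the chain $HF^+\cong \widehat{HM}\cong$ periodic Floer homology $\cong HF^{\mathrm{symp}}$ (Kutluhan--Lee--Taubes, Lee--Taubes, Kronheimer--Mrowka), and only then uses the structure of $HF^{\mathrm{symp}}$ for pseudo-Anosov maps, keeping track of the one extra fixed point created in the capping disk. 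Without either that published chain of isomorphisms on the closed surface or the unproved Ghiggini--Spano theorem, your step~4 has no support, and the proof does not get off the ground.

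A secondary misreading: you locate the main difficulty in ``correcting the misleading formula'' of Cotton-Clay at the pronged fixed points. In fact the gap in \cite[Theorem~4.16]{CC} concerns only the flip-twist annuli of \emph{reducible} mapping classes, and the paper states explicitly that it does not affect the hyperbolic case at all; for a pseudo-Anosov map every fixed point is alone in its Nielsen class, the differential vanishes, and $\mathrm{rank}\,CF_*^{\mathrm{symp}}=\mathrm{rank}\,HF_*^{\mathrm{symp}}$ with no delicate cancellation analysis needed. The genuinely hard input in your step~3 direction is not index bookkeeping but the identification of the Floer group itself, which is the gap described above.
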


The main theorem in this paper removes the hyperbolic condition in the above result. For technical reasons, we will use coefficients in $\mathbb F=\mathbb Z/2\mathbb Z$.

\begin{thm}\label{thm:FPbound}
Let $Y$ be a closed, oriented $3$--manifold, and $K\subset Y$ be a fibered knot with
fiber $F$ and monodromy $\varphi$. If \[\mathrm{rank}\widehat{HFK}(Y,K,[F],g(F)-1;\mathbb F)=r,\] then $\varphi$ is freely isotopic to a diffeomorphism with at most $r-1$ fixed points. 
\end{thm}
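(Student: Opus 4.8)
The plan is to translate the statement about fixed points of the monodromy into a computation in fixed-point (symplectic) Floer homology, and then to match that Floer homology with the given knot Floer group. First I would invoke the identification of the next-to-top summand of knot Floer homology with the fixed-point Floer homology of the monodromy: for a fibered knot there is an isomorphism
\[
\widehat{HFK}(Y,K,[F],g(F)-1;\mathbb F)\;\cong\;HF_{\mathrm{symp}}(\varphi),
\]
where $HF_{\mathrm{symp}}(\varphi)$ denotes the fixed-point Floer homology of an area-preserving representative of $\varphi$ with standard behaviour near $\partial F$. Thus $\mathrm{rank}\,HF_{\mathrm{symp}}(\varphi)=r$, and the problem becomes: exhibit a representative of the free isotopy class of $\varphi$ whose number of fixed points is at most $r-1$.

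The second step is to replace $\varphi$ by an efficient geometric representative via the Nielsen--Thurston classification. Freely isotoping, I would put $\varphi$ into Thurston standard form: a map that is pseudo-Anosov, periodic, or reducible, the last being assembled from periodic and pseudo-Anosov pieces along a system of reducing annuli. The pseudo-Anosov case is exactly Theorem \ref{thm:FPhyp}, so the new content lies in the periodic and reducible cases. For a standard-form representative I would apply Cotton-Clay's computation of symplectic Floer homology, which expresses $HF_{\mathrm{symp}}$ combinatorially in terms of the fixed-point data of the standard form and, in good cases, shows that the fixed-point generators survive to homology without cancellation. Comparing the number of such generators (the fixed points, suitably perturbed and counted) against $\mathrm{rank}\,HF_{\mathrm{symp}}(\varphi)=r$ then bounds the fixed points.

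The boundary of $F$ is the source of the $-1$. Since $\varphi$ fixes $\partial F$, the standard form carries a distinguished boundary-parallel contribution to $HF_{\mathrm{symp}}$ of rank one, and this contribution can be removed by a free isotopy that pushes the dynamics off the boundary; this is exactly what allows the pseudo-Anosov maps in Theorem \ref{thm:FPhyp} to be made fixed-point free when $r=1$. After deleting this boundary term, the number of interior fixed points of the efficient representative is at most $r-1$.

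The main obstacle I anticipate is the reducible case, and this is precisely where the correction to Cotton-Clay's formula enters. Along the reducing curves a standard-form map carries circles (or annuli) of fixed points, which are degenerate and must be perturbed into isolated fixed points before being compared with Floer generators; the naive formula can over-count or mis-grade these contributions. I would therefore first establish the corrected rank formula for $HF_{\mathrm{symp}}$ in the presence of periodic and reducible pieces, verify that the differential still vanishes on the corrected generating set (or at least does not increase the fixed-point count), and then track the boundary contribution through the decomposition. With the corrected count in hand, summing the contributions of the pieces and subtracting the boundary term yields the bound $r-1$ in all cases.
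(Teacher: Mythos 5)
There is a genuine gap, and it sits in your very first step. The isomorphism
\[
\widehat{HFK}(Y,K,[F],g(F)-1;\mathbb F)\;\cong\;HF^{\mathrm{symp}}_*(\varphi)
\]
for the monodromy of the fiber \emph{with boundary} is not available in the literature: it is precisely the unpublished work of Ghiggini--Spano cited as \cite{GS}, and the entire point of the paper's argument is to avoid invoking it. What is actually established (via Lee--Taubes, Kutluhan--Lee--Taubes, Kronheimer--Mrowka, as used in \cite{BHS}) is the identification of $HF^+$ with symplectic Floer homology for a \emph{closed} surface bundle over $S^1$ in the next-to-top Spin$^c$ grading. The paper therefore first closes up the situation: it forms the connected sum $K\#\overline{L'}\subset Y\#Z$, where $L'$ is a cable of a fibered knot in $Z=S^1\times S^2$ whose monodromy is fixed-point free and right-veering and whose next-to-top $\widehat{HFK}$ has rank $1$ (Hedden); the K\"unneth formula gives rank $r+1$; the zero-surgery formula (Proposition~\ref{prop:ZeroSurg}, which requires the combined monodromy to be neither left- nor right-veering and the relevant $HF^+(Y\#Z,\mathfrak s)$ to vanish) gives $\mathrm{rank}\:HF^+$ of the $0$-surgery equal to $r-1$; this $0$-surgery is a surface bundle with closed fiber $P=F\cup\overline E$ and monodromy $\widehat\sigma$ extending $\varphi$, so $\mathrm{rank}\:HF^{\mathrm{symp}}_*(P,\widehat\sigma)=r-1$. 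Only then does the Cotton-Clay machinery enter, through the inequality $N(\widehat\sigma)\le\mathrm{rank}\:HF^{\mathrm{symp}}_*$ (Corollary~\ref{cor:SympFPbound}); since the added piece $\overline E$ carries no fixed points, $N(\varphi)=N(\widehat\sigma)\le r-1$, and Jiang--Guo (Theorem~\ref{thm:JG}) realizes the Nielsen number by an actual diffeomorphism. Your proposal has no substitute for this closing-up step, and without it the chain of identifications you want to use does not exist in print.

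Two further points would need repair even if one granted your step one. First, your accounting of the ``$-1$'' via a ``boundary-parallel rank-one contribution removable by a free isotopy'' is a heuristic, not an argument; in the paper the offset arises concretely as $+1$ from the K\"unneth formula and $-2$ from the zero-surgery formula (the $-2$ coming from \cite[Lemma~3.2]{NiFPFree} under the veering hypotheses), netting $r-1$. Second, your claim that in standard form ``the fixed-point generators survive to homology without cancellation'' is too strong: fixed components of the standard form contribute the relative homology of the subsurface, whose rank can exceed $1$, while such a component contributes at most one essential Nielsen class (and none at all when its Euler characteristic vanishes). The correct and usable statement is exactly the inequality $N(\phi)\le\mathrm{rank}\:HF^{\mathrm{symp}}_*(h)$ of Corollary~\ref{cor:SympFPbound}, whose proof requires the case analysis of fixed-point types and the corrected flip-twist term $\mathbb F^{2n_f}$ of Theorem~\ref{thm:SFHcorrect}; it is this inequality, not a bijection between generators and fixed points, that converts a Floer rank into a fixed-point bound.
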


The proof of Theorem~\ref{thm:FPbound} uses the same strategy as in \cite{BHS,NiFPFree}. 
Ghiggini and Spano \cite{GS} proved a similar result for certain area-preserving diffeomorphisms using a completely different framework. See Subsection~\ref{subsect:GS} for comparisons of our result with Ghiggini--Spano's work.

One important ingredient of the proof is Cotton-Clay's computation of the symplectic Floer homology of a mapping class \cite{CC}. For each mapping class of a closed surface, Cotton-Clay constructed a ``perturbed standard form map'' which is an area-preserving diffeomorphism. (A lot of such constructions have already been carried out in the work of Jiang--Guo \cite{JiangGuo} and Gautschi \cite{Gautschi}.) He then computed the symplectic Floer homology of this perturbed standard form map. 

There was, however, a gap in the statement of his computation. Below, we will state the theorem in a less confusing way. The meaning of the terms in the formula, as well as the explanation of the mistake, will be given in Section~\ref{sect:SFH}. This gap does not affect the argument in \cite{BHS} and \cite{NiFPFree}.

\begin{thm}\label{thm:SFHcorrect}
Let $\widehat{\phi}:\Sigma\to\Sigma$ be a perturbed standard form map in a reducible mapping class $h\in\Gamma$. Then
\begin{align}
HF_*^{\mathrm{symp}}(h)=HF_*^{\mathrm{symp}}(\widehat{\phi})\cong& H_{*(\mathrm{mod}\;2)}(\Sigma_a,\partial_-\Sigma_a;\mathbb F)\nonumber\\
&\oplus\bigoplus_p\big(H_{*(\mathrm{mod}\;2)}(\Sigma_{b,p}^{\circ},\partial_-\Sigma_{b,p};\mathbb F)\oplus \mathbb F^{(p-1)|\pi_0(\Sigma_{b,p})|}\big)\nonumber\\
&\oplus\bigoplus_q\big(H_{*(\mathrm{mod}\;2)}(\Sigma_{c,q},\partial_-\Sigma_{c,q};\mathbb F)\oplus \mathbb F^{q|\pi_0(\Sigma_{c,q})|}\big)\nonumber\\
&\oplus\mathbb F^{\Lambda(\widehat{\phi}|_{\Sigma_1})}\oplus\mathbb F^{2n_{f}}\oplus CF_*^{\mathrm{symp}}(\widehat{\phi}|_{\Sigma_2}).\label{eq:SFH}
\end{align}
\end{thm}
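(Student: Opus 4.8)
The plan is to re-derive Cotton-Clay's computation \cite{CC} from the ground provided by Gautschi's normal form \cite{Gautschi} and the fixed-point and monotonicity analysis of Jiang--Guo \cite{JiangGuo}, paying close attention to the periodic pieces, where the earlier statement was imprecise. First I would recall the structure theorem: a representative $\phi$ of $h$ in standard form cuts $\Sigma$ along a canonical system of reducing curves into $\phi$-invariant subsurfaces on which $\phi$ acts as a finite-order map, as a pseudo-Anosov map, or as a (fractional) twist on the annular collars of the reducing curves. The perturbed map $\widehat\phi$ is obtained by a $C^0$-small, flux-preserving perturbation supported near the finite-order pieces, using an auxiliary Morse function to replace the degenerate Morse--Bott fixed-point sets by nondegenerate ones while leaving the pseudo-Anosov part and the isolated twist-region fixed points unchanged. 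Because symplectic Floer homology is an invariant of the mapping class, the equality $HF_*^{\mathrm{symp}}(h)=HF_*^{\mathrm{symp}}(\widehat\phi)$ is immediate, and the content of the theorem is the identification of the right-hand side.

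Next I would fix the generators of $CF_*^{\mathrm{symp}}(\widehat\phi)$ as the nondegenerate fixed points of $\widehat\phi$ and sort them by the invariant piece containing them. On the part $\Sigma_a$ that is fixed setwise with trivial rotation, the perturbed fixed points are the critical points of the auxiliary Morse function, so after reducing to the $\mathbb Z/2$ grading this block contributes $H_{*(\mathrm{mod}\,2)}(\Sigma_a,\partial_-\Sigma_a;\mathbb F)$, with the relative boundary $\partial_-\Sigma_a$ recording the side toward which the collars rotate. The pseudo-Anosov piece $\Sigma_2$ contributes its chain complex $CF_*^{\mathrm{symp}}(\widehat\phi|_{\Sigma_2})$ verbatim, since in general no further simplification of that factor is available. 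A key structural input, established in \cite{JiangGuo,CC}, is that the Floer differential respects this decomposition by pieces: the reducing curves are preserved, the almost complex structure may be chosen so that the invariant subsurfaces are (almost) holomorphically separated, and the relevant energy/action bounds forbid trajectories between fixed points lying in distinct blocks or between the extra periodic and twist generators and the Morse-homology generators. I would invoke this to reduce the computation to a block-by-block homology calculation.

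The main obstacle, and the place where the correction enters, is the careful count of the fixed points produced on the remaining finite-order pieces $\Sigma_{b,p}$ and $\Sigma_{c,q}$. When a finite-order rotation of order $p$ is perturbed, the fixed points split into those descending from the quotient orbifold, which give the summand $H_{*(\mathrm{mod}\,2)}(\Sigma_{b,p}^{\circ},\partial_-\Sigma_{b,p};\mathbb F)$ (the superscript recording the removal of the exceptional orbits), together with ``rotational'' fixed points created near those exceptional orbits. Counting the latter correctly---tracking both the rotation order and the number of components $|\pi_0(\cdot)|$, and separating the two periodic types according to whether the rotation matches a half-twist across the adjacent reducing collar---is exactly what produces the free summands $\mathbb F^{(p-1)|\pi_0(\Sigma_{b,p})|}$ and $\mathbb F^{q|\pi_0(\Sigma_{c,q})|}$, and it is the conflation of these two counts that makes the formula in \cite{CC} misleading. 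The twist regions contribute the Lefschetz term $\mathbb F^{\Lambda(\widehat\phi|_{\Sigma_1})}$ from $\Sigma_1$ and the paired fixed points $\mathbb F^{2n_f}$ coming from nontrivially twisted collars, as in \cite{Gautschi,JiangGuo}. Assembling the blocks via the differential splitting then yields \eqref{eq:SFH}. The delicate point throughout is to verify that the perturbation does not create or cancel trajectories between the newly nondegenerate generators, so that each block's contribution is genuinely its own homology; the precise meaning of all the symbols, and the explicit source of the error being corrected, are deferred to Section~\ref{sect:SFH}.
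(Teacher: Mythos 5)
Your proposal misses the actual content of the theorem and locates the correction in the wrong place. Comparing (\ref{eq:CC}) with (\ref{eq:SFH}), the terms $\mathbb F^{(p-1)|\pi_0(\Sigma_{b,p})|}$ and $\mathbb F^{q|\pi_0(\Sigma_{c,q})|}$ are \emph{identical} in Cotton-Clay's formula and in the corrected one; there is no ``conflation of these two counts'' to repair. Moreover, $\Sigma_{b,p}$ and $\Sigma_{c,q}$ are not finite-order pieces with exceptional orbits: they are components fixed pointwise by $\widehat{\phi}$ which abut pseudo-Anosov pieces, with $p$ and $q$ counting prongs at the adjacent boundaries (likewise $\Sigma_1$ is the union of non-fixed periodic components, not of twist regions). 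The one and only discrepancy between the two formulas is the summand $\mathbb F^{2n_f}$, and the error it corrects lies in \cite[Lemma~4.15]{CC}: there the chain complex is asserted to split over the components of $\Sigma\setminus N$, which forgets that fixed points can also lie in $N$ itself, namely in the annuli on which $\widehat{\phi}$ is a flip-twist map. Your attribution of $\mathbb F^{2n_f}$ to ``nontrivially twisted collars'' does not work: a genuine (non-flip) twist annulus has \emph{no} interior fixed points, by condition (2) of the standard form; it is the flip, not the twist, that produces fixed points.

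The argument actually needed is short: each flip-twist annulus, with map $(q,p)\mapsto(1-q,-p+f(q))$, has exactly two fixed points, lying on the circle $q=1/2$ (the Type IIb points); by the analysis in \cite[Subsection~4.3]{CC} each of these is the unique fixed point in its Nielsen class; since the Floer differential only connects Nielsen-equivalent fixed points, it vanishes identically on the corresponding rank-two subcomplex, which therefore survives to homology as $\mathbb F^2$ per flip-twist annulus, i.e.\ $\mathbb F^{2n_f}$ in total. All remaining summands are then exactly as computed in \cite{CC}. Your proposal never produces this: it neither identifies the flip-twist annuli as the overlooked components (components of $N$, not of $\Sigma\setminus N$), nor gives the Nielsen-class reason that their contribution persists in homology; and deferring ``the explicit source of the error being corrected'' to Section~\ref{sect:SFH} is circular, since that section is precisely where this proof must be supplied.
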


The difference between (\ref{eq:SFH}) and the formula in \cite[Theorem~4.16]{CC} is that there is an extra term $\mathbb F^{2n_{f}}$ in (\ref{eq:SFH}), where $n_f$ is the number of annuli on which the restriction of $\widehat{\phi}$ is a flip-twist map. See also Remark~\ref{rem:CC} for another interpretation of \cite[Theorem~4.16]{CC} which yields the correct result.


\subsection{Comparisons with Ghiggini--Spano's work}\label{subsect:GS}

Both our work and Ghiggini--Spano's \cite{GS} prove the isomorphism between the next-to-top term of the knot Floer homology of a fibered knot $K\subset Y$ and some versions of the symplectic Floer homology of mapping classes of surfaces. In our paper, we use the symplectic Floer homology of mapping classes of a closed surface $\Sigma$. The closed surface $\Sigma$ and the mapping class are constructed from the Seifert surface $F$ and the monodromy of $K$. In Ghiggini--Spano's work, the authors defined the symplectic Floer homology of mapping classes of a surface with boundary, and showed that for the Seifert surface $F$ and the monodromy of a fibered knot $K$, the symplectic Floer homology is isomorphic to the next-to-top term of the knot Floer homology of $K$ \cite[Theorem~1.1]{GS}. 

After establishing the isomorphism, we then use Cotton-Clay's work \cite{CC} to show Theorem~\ref{thm:FPbound}, which says that $r-1$ is an {\it upper bound} to the minimal number of fixed points among {\it all} diffeomorphisms in the mapping class of the monodromy. On the other hand, Ghiggini--Spano \cite{GS} only considered the minimal number of fixed points of {\it area-preserving non-degenerate} diffeomorphisms in this
mapping class of the monodromy. In this case, they get the {\it exact value} of this minimal number of fixed points \cite[Theorem~1.2]{GS}, which is certainly an upper bound to the minimal number of fixed points among all diffeomorphisms in this mapping class. (Or in their original language, it is a {\it sharp lower bound} to the number of fixed points of area-preserving non-degenerate diffeomorphisms in this
mapping class.)
For their purpose, they also use another work of Cotton-Clay \cite{CC2}, and they need the technical condition that either the ambient $3$--manifold $Y$ is a rational homology sphere or the mapping class is irreducible.

The two kinds of minimal numbers of fixed points considered in our work and Ghiggini--Spano's work \cite{GS} agree for a lot of mapping classes including the pseudo-Anosov ones.

\subsection{Organizations}
This paper is organized as follows. In Section~\ref{sect:Nielsen}, we review some results in Nielsen fixed point theory. In Section~\ref{sect:SFH}, we review Cotton-Clay's work on symplectic Floer homology of mapping classes, and explain how we get Theorem~\ref{thm:SFHcorrect}. In Section~\ref{sect:Main}, we use the zero-surgery formula in Heegaard Floer homology to prove Theorem~\ref{thm:FPbound}.

\vspace{5pt}\noindent{\bf Acknowledgements.}\quad  The author was
partially supported by NSF grant number DMS-1811900. We are grateful to John Baldwin, Andrew Cotton-Clay, Ko Honda, and Steven Sivek for helpful comments. The author is particularly grateful to John Baldwin for pointing out a gap in an earlier version of this paper and suggesting a fix.


\section{Nielsen theory of surfaces}\label{sect:Nielsen}

In this section, we will give a brief introduction to Nielsen theory following Jiang \cite{Jiang}. Then we will review Jiang--Guo's work \cite{JiangGuo} on the Nielsen problem for surface diffeomorphisms. 

\begin{defn}
Let $X$ be a topological space, $f: X\to X$ be a continuous map. We say two fixed points $a_0,a_1\in\mathrm{Fix}(f)$ are {\it Nielsen equivalent}, if there exists a path $\gamma$ from $a_0$ to $a_1$, such that $\gamma$ is homotopic to $f(\gamma)$ rel $\partial$. A {\it Nielsen class} is the collection of all fixed points which are Nielsen equivalent to a given fixed point.
\end{defn}

If $\mathcal C$ is a Nielsen class, then $\mathcal C$ is an open set in $\mathrm{Fix}(f)$ \cite[Theorem~1.12]{Jiang}.

\begin{defn}
Let $U\subset \mathbb R^n$ be an open set, $f: U\to \mathbb R^n$ be a continuous map, $a\in \mathrm{Fix}(f)$ be an isolated fixed point. Let $B_{\varepsilon}(a)\subset U$ be a small open ball about $a$ satisifying $B_{\varepsilon}(a)\cap \mathrm{Fix}(f)=\{a\}$.
Then the {\it fixed point index} of $a$, denoted by $\mathrm{index}(f,a)$, is the degree of the map $\varphi:\partial B_{\varepsilon}(a)\to S^{n-1}$ defined by
\[
x\mapsto\frac{x-f(x)}{||x-f(x)||}.
\]
\end{defn}

The above definition can be extended to the case when $a$ is an isolated fixed point of a map $f: X\to X$ with $X$ being a manifold, since the nature of the definition is local. Moreover, for any open set $U\subset X$ such that $U\cap \mathrm{Fix}(f)$ is compact, one can define the {\it fixed point index} $\mathrm{index}(f,U)$. One way to define the index is to approximate $f$ with a generic smooth map $g$ in $U$ with only isolated fixed points, and define $\mathrm{index}(f,U)$ to be the sum of the indices of fixed points of $g$. 

Now let $X$ be a compact manifold, $f:X\to X$ be a map. Given a Nielsen class $\mathcal C$, since $\mathcal C$ is an isolated set of fixed points, we can choose an open set $U\subset X$ such that $U\cap \mathrm{Fix}(f)=\mathcal C$. We can define the {\it fixed point index} \[\mathrm{index}(f,\mathcal C):=\mathrm{index}(f,U).\]

\begin{thm}[Lefschetz--Hopf]\label{thm:LefHopf}
The total fixed point index $\mathrm{index}(f,X)$ is equal to the Lefschetz number $\Lambda(f)$.
\end{thm}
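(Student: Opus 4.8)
The plan is to prove that both $\mathrm{index}(f,X)$ and $\Lambda(f)$ are homotopy invariants, reduce to a transverse smooth representative whose fixed points are all nondegenerate, compute the local index at each such point, and finally identify the resulting global sum with the Lefschetz number by intersection theory on $X\times X$.

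First I would invoke homotopy invariance on both sides. The Lefschetz number $\Lambda(f)=\sum_i(-1)^i\,\mathrm{tr}\!\big(f_*\colon H_i(X;\mathbb Q)\to H_i(X;\mathbb Q)\big)$ depends only on the induced maps on homology, hence only on the homotopy class of $f$. The total index $\mathrm{index}(f,X)$ is likewise homotopy invariant: this follows from the homotopy invariance of the degree used to define it, together with the fact that for $X$ closed no fixed points can escape during a homotopy (for $X$ with boundary one must additionally assume the homotopy has no fixed points on $\partial X$, a point I return to below). Using the generic-smooth-approximation description of the index already set up in the excerpt, I may therefore replace $f$ by a smooth map $g$ homotopic to it for which every fixed point $a$ is \emph{nondegenerate}, i.e. $I-dg_a$ is invertible; such $g$ has only finitely many, isolated, fixed points.

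Next comes the local computation. At a nondegenerate fixed point $a$, the index $\mathrm{index}(g,a)$ is by definition the degree of $x\mapsto (x-g(x))/\|x-g(x)\|$ on a small sphere around $a$. Since $x\mapsto x-g(x)$ vanishes at $a$ with derivative $I-dg_a$, a linearization argument shows this degree equals $\mathrm{sign}\det(I-dg_a)$. Summing over the finitely many fixed points gives $\mathrm{index}(g,X)=\sum_{a\in\mathrm{Fix}(g)}\mathrm{sign}\det(I-dg_a)$, reducing the theorem to a global identity between this signed count and $\Lambda(g)=\Lambda(f)$.

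The heart of the argument — and the step I expect to be the main obstacle — is this global identification. For $X$ closed and oriented I would work in $X\times X$, where the fixed points of $g$ correspond exactly to intersections of the graph $\Gamma_g=\{(x,g(x))\}$ with the diagonal $\Delta$; nondegeneracy of $a$ is precisely transversality of this intersection, and a sign check identifies the local intersection number with $\mathrm{sign}\det(I-dg_a)$. Thus $\mathrm{index}(g,X)=[\Gamma_g]\cdot[\Delta]$. Evaluating this algebraic intersection number requires Poincaré duality and the Künneth theorem: expanding the Poincaré dual of $\Delta$ in terms of dual bases of $H^*(X)$ and pairing against $[\Gamma_g]$ collapses the intersection number to the alternating trace $\sum_i(-1)^i\,\mathrm{tr}(g^*|_{H^i(X)})=\Lambda(g)$, which is the delicate computation to carry out carefully. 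To cover the manifolds-with-boundary and non-orientable cases relevant to the surface applications in this paper, where the clean intersection picture is unavailable, I would instead use a finely triangulated model, take a simplicial approximation, and apply the Hopf trace formula (equating the alternating trace on chain groups with that on homology) together with a local analysis matching the chain-level contribution near each Nielsen class to its fixed point index; this orientation-free route is more technical but robust, and it is where the real work of the theorem lies.
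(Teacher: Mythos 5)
The paper does not actually prove this statement: Theorem~\ref{thm:LefHopf} is the classical Lefschetz--Hopf theorem, quoted without proof as background in the review of Nielsen theory (following Jiang's lectures), so there is no in-paper argument to compare yours against. Judged on its own, your outline is the standard textbook proof and its skeleton is sound: homotopy invariance on both sides, reduction to a smooth representative with nondegenerate fixed points, the local identification $\mathrm{index}(g,a)=\mathrm{sign}\det(I-dg_a)$, and the graph--diagonal computation $[\Gamma_g]\cdot[\Delta]=\sum_i(-1)^i\,\mathrm{tr}\bigl(g^*|_{H^i(X)}\bigr)$ via Poincar\'e duality and K\"unneth. Two remarks. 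First, your worry about fixed points that ``escape'' during a homotopy is slightly misplaced: for the \emph{total} index on a compact $X$ (with or without boundary), homotopy invariance is automatic, because the fixed point set of the whole homotopy is closed in $X\times[0,1]$ and hence compact; the boundary caveat is only needed for the index of a proper open subset $U$, where fixed points can cross $\partial U$. Second, your flagging of the with-boundary and non-orientable cases is apt and not optional here: the paper applies the theorem to compact surfaces with boundary (for instance $\mathrm{index}(\widehat\phi,U)=\chi(S)$ for a fixed component $S$ in the proof of Corollary~\ref{cor:SympFPbound}), where the closed oriented intersection-theory picture is unavailable. The route you name for that case --- simplicial approximation together with the Hopf trace formula and a local matching of chain-level contributions to the fixed point index --- is the correct classical one, but it is precisely where the substantive work of the theorem lies, and your proposal defers it rather than carries it out. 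So as a plan it is the right plan; as written it is a complete proof only in the closed oriented case.
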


\begin{defn}
A Nielsen class $\mathcal C$ is {\it essential} if $\mathrm{index}(f,\mathcal C)\ne0$.
The {\it Nielsen number} $N(f)$ of a map $f$ is the number of essential Nielsen classes of $f$.
\end{defn}

The Nielsen number $N(f)$ is a homotopy invariant of the map $f$. It gives a lower bound to the number of fixed points of any map in the homotopy class of $f$. In many cases, this lower bound is sharp. The following theorem of Jiang--Guo \cite{JiangGuo} establishes the sharpness in the case of surface diffeomorphisms in a given isotopy class. (The original theorem is stated for possibly non-orientable surfaces.)

\begin{thm}[Jiang--Guo]\label{thm:JG}
Let $\Sigma$ be a compact, oriented, connected surface, $f:\Sigma\to\Sigma$ be an orientation-preserving homeomorphism. Then $f$ is isotopic to a diffeomorphism $g$ which has exactly $N(f)$ fixed points.
\end{thm}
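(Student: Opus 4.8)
The lower bound is essentially free: since $N(f)$ is a homotopy invariant and every fixed point of a map homotopic to $f$ lies in a well-defined Nielsen class, any representative — in particular any diffeomorphism isotopic to $f$ — has at least $N(f)$ fixed points, one for each essential class. So the entire content is the \emph{realization}: I would produce a diffeomorphism $g$ isotopic to $f$ whose fixed points are in bijection with the essential Nielsen classes, each essential class being realized by a single nondegenerate fixed point and each inessential class carrying none. The standing warning is that the analogous statement for arbitrary self-maps (the Wecken property) fails on surfaces, so the argument must exploit that $f$ is a homeomorphism; the tool that supplies this structure is the Nielsen--Thurston classification.

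The plan is first to replace $f$, up to isotopy, by a standard-form map $\phi$ adapted to a $\phi$-invariant decomposition of $\Sigma$. By Nielsen--Thurston there is a (possibly empty) system of disjoint essential annuli whose complementary pieces are permuted by $\phi$, and on each orbit of pieces $\phi$ restricts to either a periodic map or a pseudo-Anosov map, while on the annuli $\phi$ is a power of a Dehn twist or a flip-twist. Fixed points can then be located and counted piece by piece, and by the Lefschetz--Hopf theorem (Theorem~\ref{thm:LefHopf}) the fixed point indices over all pieces and annuli must sum to $\Lambda(f)$, which furnishes the global bookkeeping constraint.

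Next I would analyze each type of piece. On a pseudo-Anosov piece the fixed points are already isolated; their indices are governed by the local prong structure of the invariant foliations (a fixed point with $k$ unstable prongs that are cyclically permuted has a controlled index), and a small perturbation makes each such point nondegenerate without altering its Nielsen class, so each essential class on the piece is realized exactly once. The periodic pieces and the twisted annuli are the source of the difficulty, because there the fixed set is typically non-isolated — a periodic piece may be pointwise fixed by the relevant power, and a twist annulus carries whole circles of fixed points — so I would perturb within the isotopy class to break these fixed circles into finitely many isolated, nondegenerate fixed points whose signed count matches the index already dictated by Theorem~\ref{thm:LefHopf}.

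The main obstacle is twofold, and it is exactly where the homeomorphism hypothesis earns its keep. First, inessential Nielsen classes — which may be created precisely by the perturbations just described, or by the interaction of a twist region with an adjacent periodic piece — must be eliminated; on a surface one cannot in general cancel a pair of index-$(+1)$ and index-$(-1)$ fixed points by a homotopy, but within an isotopy of a homeomorphism one can drag the two points together through an embedded disk and annihilate them, provided the disk can be chosen disjoint from all other fixed points, which forces a careful ordering of the cancellation moves and special attention to the low-complexity pieces (disks, annuli, and spheres with few marked points) where such disks may be scarce. Second, one must verify that after all cancellations the number of surviving fixed points is not merely $\ge N(f)$ but \emph{equal} to $N(f)$, i.e. that no essential class was inadvertently split and no spurious essential class persists; this is the delicate combinatorial step, reconciling the local index computations on each piece with the global Lefschetz count to certify sharpness.
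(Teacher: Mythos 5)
The paper offers no proof of this statement at all: Theorem~\ref{thm:JG} is quoted from Jiang--Guo \cite{JiangGuo} as a known result, so your attempt can only be compared with their actual argument. Its skeleton---Thurston normal form, a piece-by-piece analysis of fixed points, then simplification within the isotopy class---does match theirs, but two parts of your plan are genuine gaps rather than omitted routine details. The first is your insistence on nondegenerate fixed points, which is inconsistent with the conclusion you are trying to reach. The index of an essential Nielsen class is an invariant of the homotopy (hence isotopy) class, and it can have absolute value larger than $1$: an unrotated $p$-pronged fixed point of a pseudo-Anosov piece has index $1-p$. A nondegenerate fixed point of a diffeomorphism $g$ has index $\mathrm{sign}\det(I-dg)=\pm1$, so an essential class of index $1-p$ with $p\ge3$ can never be realized by a single nondegenerate point; the diffeomorphism produced by the theorem necessarily has degenerate fixed points in general. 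Relatedly, your step of perturbing each pseudo-Anosov fixed point to be nondegenerate splits a point of index $1-p$ into at least $|1-p|$ points in the same Nielsen class, moving you away from, not toward, the count $N(f)$; you would then have to re-merge them, contradicting nondegeneracy again.

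The second gap is that the crux of the theorem---eliminating inessential classes and merging each essential class to a single point by an \emph{isotopy}---is asserted rather than proved. ``Drag the two points together through an embedded disk'' is precisely the Whitney-type move whose failure for general surface maps is why the Wecken property fails in dimension $2$, and the obstruction is not merely that the disk might meet other fixed points: given Nielsen-equivalent fixed points $x,y$ joined by an arc $\gamma$ with $\gamma\simeq f(\gamma)$ rel endpoints, the arcs $\gamma$ and $f(\gamma)$ may intersect essentially, so that no embedded disk supporting the cancellation exists at all. Overcoming this for homeomorphisms is the actual content of Jiang--Guo's paper, and they do it by explicit analysis of the standard-form pieces and of the annuli between them (where the Nielsen classes and their indices are computed case by case), not by a general cancellation lemma plus Lefschetz bookkeeping. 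As written, your ``main obstacle'' paragraph is an accurate description of what remains to be done---but what remains is essentially the whole theorem.
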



\section{The symplectic Floer homology of a mapping class}\label{sect:SFH}

In this section, we will review Cotton-Clay's computation of the symplectic Floer homology of a mapping class. We will work over $\mathbb F=\mathbb Z/2\mathbb Z$.

Let $\Sigma$ be a closed, oriented, connected surface with an area form $\omega$ and $g(\Sigma)\ge2$. Let $\phi$ be an area-preserving diffeomorphism of $\Sigma$, such that all fixed points of $\phi$ are non-degenerate, and $\phi$ satisfies a monotonicity condition. The existence of such diffeomorphisms was proved in \cite{Seidel}, see \cite[Lemma~7]{Seidel} and the paragraph before it. The symplectic Floer chain complex $CF_*^{\mathrm{symp}}(\phi)$ is an $\mathbb Z/2\mathbb Z$--graded vector space over $\mathbb F$, which is freely generated by the fixed points of $\phi$. The differential $\partial$ counts holomorphic disks satisfying certain conditions. 
A fixed point $y$ appears in $\partial x$ for another fixed point $x$, only if $x$ and $y$ are Nielsen equivalent. So
$CF_*^{\mathrm{symp}}(\phi)$ naturally splits as a direct sum over all Nielsen classes.

Seidel \cite{Seidel} proved that $HF_*^{\mathrm{symp}}(\phi)$ only depends on the mapping class $h$ of $\phi$, so one can define
\[
HF_*^{\mathrm{symp}}(h)=HF_*^{\mathrm{symp}}(\phi)
\]
to be the symplectic Floer homology of the mapping class $h$.
The symplectic Floer homology of mapping classes was extensively studied in \cite{Gautschi,CC}.

By Thurston's classification of surface automorphisms \cite{ThurstonSurface}, we may find a diffeomorphism $\phi$ in a mapping class $h$, which is in the {\it standard form} in the following sense:
There exists a finite union $N$ of disjoint non-contractible annuli, such that $\phi(N)=N$. Moreover, the following conditions are satisfied:

\ 

\quad(1) For any component $A$ of $N$, there exists a positive integer $\ell$, such that $\phi^{\ell}(A)=A$ and $\phi^{\ell}|_A$ is a {\it twist map} or a {\it flip-twist map}. A twist map on $[0,1]\times S^1$ has the form
\[
(q,p)\mapsto(q,p-f(q)),
\]
and a flip-twist map on $[0,1]\times S^1$ has the form
\[
(q,p)\mapsto(1-q,-p+f(q)),
\]
where $f:[0,1]\to\mathbb R$ is a strictly monotonic smooth map. The (flip-)twist map is {\it positive} or {\it negative} if $f$ is increasing or decreasing, respectively.

\quad(2) In the first condition, if $\ell=1$ and $\phi|_A$ is a twist map, then $\phi$ has no fixed point in $\mathrm{int}(A)$. That is, $\mathrm{im}(f)\subset[0,1]$. We also require that parallel twist regions are twisted in the same direction.

\quad(3) For any component $S$ of $\Sigma\setminus\mathrm{int}(N)$, there exists a positive integer $\ell$, such that $\phi^{\ell}(S)=S$ and $\phi^{\ell}|_S$ is either periodic or pseudo-Anosov. 

\ 

There are 3 types of fixed points of $\phi$, listed in \cite[Subsection~4.3]{CC}. Type I consists of points in components $S$ of $\Sigma\setminus \mathrm{int}(N)$ which are fixed by $\phi$ pointwise. Such components are called {\it fixed components}. This type is further divided into Type Ia ($\chi(S)<0$) and Ib ($\chi(S)=0$). Type II consists of fixed points in periodic components (Type IIa) and flip-twist regions (Type IIb). Type III consists of fixed points in pseudo-Anosov components. This type is further divided into 4 sub-types: IIIa (fixed points not associated with any singular points or punctures), IIIb (fixed points associated with unrotated singular points), IIIc (fixed points associated with rotated singular points), IIId (fixed points associated with unrotated punctures). 

In \cite[Subsection~4.5]{CC}, $\phi$ is further perturbed to an area-preserving diffeomorphism $\widehat{\phi}$ for which one will compute the symplectic Floer homology. 

The subsurface $\Sigma\setminus\mathrm{int}(N)$ can be divided into three parts: Let $\Sigma_0$ be the collection of fixed components, $\Sigma_1$ be the collection of (non-fixed) periodic components, and $\Sigma_2$ be the collection of pseudo-Anosov components with punctures.

There are two types of boundary components of $\Sigma_0$ defined in \cite[Subsection~4.5]{CC}: $\partial_{\pm}\Sigma_0$. 

Let $\Sigma_a$ be the collection of fixed components which do not meet any pseudo-Anosov components. Let $\Sigma_{b,p}$ be the collection of fixed components which meet one pseudo-Anosov component at a boundary with $p$ prongs.  Let $\Sigma^{\circ}_{b,p}$ be the collection of components of $\Sigma_{b,p}$ with each component punctured once. Let $\Sigma_{c,q}$ be the collection of fixed components which meet at least two pseudo-Anosov components such that the total number of prongs over all the boundaries is $q$.

Let $\Lambda(\widehat{\phi}|_{\Sigma_1})$ be the Lefschetz number of $\widehat{\phi}|_{\Sigma_1}$. Let $CF_*^{\mathrm{symp}}(\widehat{\phi}|_{\Sigma_2})$ be the symplectic Floer chain complex for $\widehat{\phi}|_{\Sigma_2}$ on the component $\Sigma_2$. More precisely, $CF_*^{\mathrm{symp}}(\widehat{\phi}|_{\Sigma_2})$ is freely generated by the fixed points of $\widehat{\phi}|_{\Sigma_2}$, except that when a
pseudo-Anosov component abuts a fixed component, we include the boundary fixed
points as part of the fixed component and not as part of the pseudo-Anosov component.

In \cite[Theorem~4.16]{CC}, Cotton-Clay stated the following formula for the symplectic Floer homology of a reducible mapping class. Let $\widehat{\phi}:\Sigma\to\Sigma$ be a perturbed standard form map in a reducible mapping class $h$. Then
\begin{align}
HF_*^{\mathrm{symp}}(h)=HF_*^{\mathrm{symp}}(\widehat{\phi})\cong& H_{*(\mathrm{mod}\;2)}(\Sigma_a,\partial_-\Sigma_a;\mathbb F)\nonumber\\
&\oplus\bigoplus_p\big(H_{*(\mathrm{mod}\;2)}(\Sigma_{b,p}^{\circ},\partial_-\Sigma_{b,p};\mathbb F)\oplus \mathbb F^{(p-1)|\pi_0(\Sigma_{b,p})|}\big)\nonumber\\
&\oplus\bigoplus_q\big(H_{*(\mathrm{mod}\;2)}(\Sigma_{c,q},\partial_-\Sigma_{c,q};\mathbb F)\oplus \mathbb F^{q|\pi_0(\Sigma_{c,q})|}\big)\nonumber\\
&\oplus\mathbb F^{\Lambda(\widehat{\phi}|_{\Sigma_1})}\oplus CF_*^{\mathrm{symp}}(\widehat{\phi}|_{\Sigma_2}).\label{eq:CC}
\end{align}

However, there is an omission in the above formula. It happened in \cite[Lemma~4.15]{CC}, where the author stated that ``... the Floer homology chain complex $(CF_*(\phi),\partial_{J_t})$ splits into a sum of chain
complexes $(\mathcal C_i,\partial_i)$ for each component of $\Sigma\setminus N$.'' The problem is, if $\phi|_{A}$ is a flip-twist map for some component $A$ of $N$, there will be two Type IIb fixed points in $A$, and each of these two fixed points is the only fixed point in its Nielsen class as argued in \cite[Section~4.3]{CC}. So there is a contribution of a rank $2$ summand from $A\subset N$, which is not included in the statements of \cite[Lemma~4.15 and Theorem~4.16]{CC}. 

In fact, in the statement of \cite[Theorem~1.6]{CC}, the author stated that ``$HF_*(h)$ splits into summands for each component of $\Sigma\setminus C$'', where $C=\partial N$ is a collection of simple closed curves preserved by $h$. So the contribution of the flip-twist annuli is implicitly included in this statement.



To get the correct formula, we should include the flip-twist annuli. Let $n_f$ be the number of annuli on which the restriction of $\widehat{\phi}$ is a flip-twist map. Then there is a summand $\mathbb F^{2n_f}$ in $CF_*(\widehat{\phi})$, on which the differential is zero. So $HF_*(\widehat{\phi})$ should have such a summand. Other summands of $HF_*(\widehat{\phi})$ can be obtained in the same way as in \cite{CC}. This finishes the proof of Theorem~\ref{thm:SFHcorrect}.

It is not hard to see 
\[
2n_f=\Lambda(\widehat{\phi}|_N).
\]
So the extra summand may also be written as $\mathbb F^{\Lambda(\widehat{\phi}|_N)}$.

\begin{rem}\label{rem:CC}
According to Cotton-Clay \cite{CCp}, one can adjust the statements of \cite[Lemma~4.15 and Theorem~4.16]{CC} so that (\ref{eq:CC}) still gives the correct result. One way is to define $\Sigma_1$ to be the disjoint union of (non-fixed) periodic regions and the flip-twist annuli. The other way is, when one defines the standard form map, if $\phi|_A$ is a flip-twist map, one decomposes $A$ as the union of a $2$--periodic annuli and two twist annuli, hence $\Sigma_1$ will include a $2$--periodic annulus for each flip-twist annulus. Nonetheless, in this paper we still choose (\ref{eq:SFH}) as the formula for symplectic Floer homology since it is more direct.
\end{rem}

\begin{cor}\label{cor:SympFPbound}
Let $\Sigma$ be a closed, oriented, connected surface, $\phi:\Sigma\to\Sigma$ be an orientation-preserving homeomorphism, and $h$ be the mapping class of $\phi$. Then
\[
N(\phi)\le\mathrm{rank}HF^{\mathrm{symp}}_*(h).
\]
\end{cor}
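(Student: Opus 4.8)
The plan is to exploit the decomposition, recalled in Section~\ref{sect:SFH}, of the symplectic Floer complex as a direct sum indexed by the Nielsen classes of $\phi$, and to match the $\mathbb{Z}/2$--graded Euler characteristic of each summand with the fixed point index of the corresponding class. The point is that an \emph{essential} Nielsen class has nonzero index, hence nonzero Euler characteristic, hence nonzero Floer homology, so it contributes at least one to the total rank.

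First I would pass to a convenient representative. By Seidel's invariance theorem \cite{Seidel}, $HF^{\mathrm{symp}}_*(h)$ may be computed from any area--preserving diffeomorphism $\phi_0$ in the class $h$ whose fixed points are all non-degenerate and which satisfies the monotonicity condition. Since the Nielsen number is a homotopy invariant and homotopic surface homeomorphisms are isotopic, $N(\phi)=N(\phi_0)$, so it suffices to prove the bound for $\phi_0$. As recalled in Section~\ref{sect:SFH}, a fixed point $y$ can appear in $\partial x$ only when $x$ and $y$ are Nielsen equivalent; hence the chain complex, and therefore its homology, splits as
\[
HF_*^{\mathrm{symp}}(\phi_0)\cong\bigoplus_{\mathcal C}HF_*^{\mathrm{symp}}(\phi_0,\mathcal C),
\]
the sum running over the Nielsen classes $\mathcal C$ of $\phi_0$.

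The key step is to identify, for each class $\mathcal C$, the $\mathbb{Z}/2$--graded Euler characteristic of the summand $HF_*^{\mathrm{symp}}(\phi_0,\mathcal C)$ with the fixed point index $\mathrm{index}(\phi_0,\mathcal C)$. Because $\phi_0$ is non-degenerate, each fixed point $x$ contributes a single generator whose Floer grading mod $2$ is determined by the sign of $\det(I-d_x\phi_0)$, that is, by the local fixed point index $\pm1$ of $x$. Summing these signs over the fixed points in $\mathcal C$ computes $\mathrm{index}(\phi_0,\mathcal C)$ already at the chain level, and the Euler characteristic is unchanged on passing to homology. Consequently, if $\mathcal C$ is essential then $\mathrm{index}(\phi_0,\mathcal C)\ne0$, so the Euler characteristic of $HF_*^{\mathrm{symp}}(\phi_0,\mathcal C)$ is nonzero and $\mathrm{rank}\,HF_*^{\mathrm{symp}}(\phi_0,\mathcal C)\ge1$. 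Since distinct essential classes land in distinct summands, we obtain
\[
N(\phi)=N(\phi_0)=\#\{\text{essential classes}\}\le\sum_{\mathcal C}\mathrm{rank}\,HF_*^{\mathrm{symp}}(\phi_0,\mathcal C)=\mathrm{rank}\,HF_*^{\mathrm{symp}}(h).
\]

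The main obstacle is the third step: the identification of the mod-$2$ Floer grading of a non-degenerate fixed point with the sign of its fixed point index, which is the Floer-theoretic incarnation of the Lefschetz--Hopf principle (Theorem~\ref{thm:LefHopf}), together with checking its compatibility with the Nielsen-class splitting. This is standard for surface symplectomorphisms, and everything else is bookkeeping. I would also record the genus restriction $g(\Sigma)\ge2$ under which $HF^{\mathrm{symp}}_*$ is defined, treating any low-genus cases separately if needed.
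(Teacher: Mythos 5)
Your argument is correct, but it takes a genuinely different route from the paper's. The paper deduces Corollary~\ref{cor:SympFPbound} from the corrected structure formula (\ref{eq:SFH}): it takes a perturbed standard form representative $\widehat{\phi}$ and runs a case-by-case analysis of Cotton-Clay's fixed point types, checking that Type II and Type III fixed points give essential Nielsen classes (singletons, or classes of constant-index points) accounted for by the last row of (\ref{eq:SFH}), that a fixed component $S$ with $\chi(S)=0$ forms a single inessential class, and that a fixed component with $\chi(S)<0$ contributes at least $1$ to the rank of its homological summand while contributing at most $1$ to $N(\widehat{\phi})$. Your proof bypasses the structure theorem entirely: it uses only Seidel's invariance, the Nielsen-class splitting of the chain complex (recalled in Section~\ref{sect:SFH}), and the identification of the mod-$2$ Floer grading of a non-degenerate fixed point with its Lefschetz sign $\mathrm{sign}\det(I-d_x\phi_0)$, so that the Euler characteristic of each Nielsen summand equals the fixed point index of that class; this per-class Lefschetz--Hopf identity is indeed standard for surface symplectomorphisms (it is how one sees that $\chi(HF^{\mathrm{symp}}_*)$ recovers the Lefschetz number), and since the differential has odd degree the Euler characteristic survives to homology, giving $\mathrm{rank}\,HF^{\mathrm{symp}}_*(\phi_0,\mathcal C)\ge|\mathrm{index}(\phi_0,\mathcal C)|\ge1$ for essential $\mathcal C$. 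What your approach buys is independence from Cotton-Clay's computation, and in particular from the correction in Theorem~\ref{thm:SFHcorrect} --- your bound would hold even if (\ref{eq:SFH}) were never written down; what the paper's approach buys is consistency with its narrative (having just corrected the formula, the author verifies the corollary directly from it) and more explicit information about which Nielsen classes are essential in standard form. Two minor points: you do not need the fact that homotopic surface homeomorphisms are isotopic --- homotopy invariance of $N$ already gives $N(\phi)=N(\phi_0)$; and, as you note, the standing hypothesis $g(\Sigma)\ge2$ of Section~\ref{sect:SFH} is needed for $HF^{\mathrm{symp}}_*(h)$ to be defined, so it is implicit in the statement rather than a case you must treat separately.
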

\begin{proof}
Let $\widehat{\phi}$ be a standard form map isotopic to $\phi$.
For the terms in the last row of (\ref{eq:SFH}), the generators correspond to Type II and Type III fixed points, except that when a
pseudo-Anosov component abuts a fixed component, we include the Type IIId boundary fixed
points as part of the fixed component and not as part of the pseudo-Anosov component. As summarized in the proof of \cite[Lemma~4.15]{CC}, for Type IIa, Type IIb, Type IIIa and Type IIIc fixed points, each fixed point is the only fixed point in its Nielsen class.
Moreover, for Type IIIb fixed points and Type IIId fixed points which do not abut a Type Ia component, the Nielsen class consists of fixed points of the same index. It is clear that the Nielsen classes corresponding to Type II and Type III fixed points are all essential. 
So the rank of the last row of (\ref{eq:SFH}) is greater than or equal to the number of corresponding essential Nielsen classes.

Let $S$ be a component of $\Sigma_0$, and let $C$ be a {possibly empty} collection of boundary components of $S$. As in Theorem~\ref{thm:SFHcorrect}, let $S^{\circ}$ be $S$ with an interior point removed.  Let $U$ be an open tubular neighborhood of $S$, then 
\[
\mathrm{index}(\widehat{\phi},U)=\chi(S).
\]
When $\chi(S)=0$, $S$ itself is the collection of all fixed points in a Nielsen class by \cite[Corollary~4.9]{CC}, so this Nielsen class is inessential.
When $\chi(S)<0$, it is elementary to check 
\[
\mathrm{rank}\:H_*(S,C;\mathbb F)>0 \text{ and }\mathrm{rank}\:H_*(S^{\circ},C;\mathbb F)>0.
\]
So $S$ contributes at least $1$ to the total rank of (\ref{eq:SFH}), while it contributes at most $1$ to $N(\widehat{\phi})$.

Combining the above analysis, we get our inequality.
\end{proof}

\section{Proof of the main theorem}\label{sect:Main}

In this section, we will prove Theorem~\ref{thm:FPbound} using the argument in \cite{NiFPFree} and \cite{BNS}. We will use $\mathbb F$--coefficients for Heegaard Floer homology. 

When $\varphi=\mathrm{id}$, $Y=\#^{2g(F)}S^1\times S^2$ and $K$ is the Borromean knot. By \cite{OSzKnot}, $\mathrm{rank}\widehat{HFK}(Y,K,[F],g(F)-1;\mathbb F)=2g(F)$. On the other hand, it is elementary to freely isotope $\mathrm{id}$ to a map with exactly $|\chi(F)|=2g(F)-1$ fixed points. So the conclusion of Theorem~\ref{thm:FPbound} holds true in this case.

From now on, we assume $\varphi$ is not isotopic to $\mathrm{id}$ rel $\partial$.

Given a closed, oriented, connected $3$--manifold $Y$ and a subset $\mathfrak S\subset \mathrm{Spin}^c(Y)$, let
\[
CF^+(Y,\mathfrak S)=\bigoplus_{\mathfrak s\in \mathfrak S}CF^+(Y,\mathfrak s).
\]
Similarly, we can define the corresponding summands of various Heegaard Floer chain complexes and homology groups.

As in \cite[Lemma~4.1]{NiFPFree},
let $L\subset Z=S^1\times S^2$ be a hyperbolic fibered knot with fiber $G$, such that the $\mathrm{Spin}^c$ structure $\mathfrak r$ of the open book decomposition with binding $L$ and page $G$ is non-torsion. Let $L'$ be the $(3\ell+1,3)$--cable of $L$
for a sufficiently large integer $\ell$. (Our convention is that the cable knot is homologous to $3\ell+1$ meridians plus $3$ longitudes in the boundary of a tubular neighborhood of $L$.) Then $L'$ has a Seifert surface $E$, which is the union of three copies of $G$ and a genus--$(3\ell+1)$ surface $T$ with $|\partial T|=4$. In fact, let $V$ be a regular neighborhood of $L$, then $T$ is a fiber of a fibration of $V\setminus L'$. (One can think of $T$ as the Seifert surface of the $(3\ell+1,3)$ torus knot with three disks removed.) Then $E$ is the fiber of a fibration of $Z\setminus L'$. 

The following lemma follows from Hedden \cite{Hedden}. See \cite[Lemma~3.2]{BNS}.

\begin{lem}\label{lem:Cable}
We have
\[
\widehat{HFK}(Z,L',[E],g(L'))\cong\widehat{HFK}(Z,L',[E],g(L')-1)\cong\mathbb F,
\]
and
\[
\widehat{HFK}(Z,L',[E],g(L')-2)=0. 
\]
Moreover, both $\widehat{HFK}(Z,L',[E],g(L'))$ and $\widehat{HFK}(Z,L',[E],g(L')-1)$ are supported in the $\mathrm{Spin}^c$ structure $\mathfrak r$.
\end{lem}

Let $(Y,K,F)$ be as in the statement of Theorem~\ref{thm:FPbound}. Let $\overline{L'}\subset Z$ be the mirror image of $L'$, and $\overline E$ be the mirror image of $E$.
We will consider the connected sum $Y^{\#}=Y\#Z$, the knot 
 $K^{\#}=K\#\overline{L'}\subset Y\#Z$ and its Seifert surface $F^{\#}=F\natural \overline E$.
 
Below we will use the concepts of left-veering and right-veering 
diffeomorphisms, which can be found in \cite{HKMRightVeering}.

As in the statement of Theorem~\ref{thm:FPbound}, let $\varphi: F\to F$ be the monodromy of $K\subset Y$. By the assumption we made earlier in this section, $\varphi$ is not isotopic to $\mathrm{id}$ rel $\partial$. So $\varphi$ cannot be both left-veering and right-veering.
Without loss of generality, we may assume $\varphi$ is not left-veering.

 Let $\psi:E\to E$ be the monodromy of $L'$, then $\psi$ permutes the three copies of $G$, and its restriction on $T$ is a periodic map of period $9\ell+3$ which has no fixed point. Hence $\psi$ has no fixed point.
 Let $\sigma:F^{\#}\to F^{\#}$ be the monodromy of $K^{\#}\subset Y^{\#}$, and $g^{\#}$ be the genus of the fiber $F^{\#}$ of this knot complement.

\begin{lem}\label{lem:NoVeering}
Under the assumption that $\varphi$ is not left-veering, we can conclude that
 $\sigma$ is neither left-veering nor right-veering.
\end{lem}
\begin{proof}
Using \cite[Propositions~2.5~and~4.2]{KR}, we see that the monodromy $\psi$ of $L'$ is right-veering, hence $\sigma|_{\overline{E}}$ is left-veering. Since $\varphi=\sigma|_{F}$ is not left-veering,
$\sigma$ is neither left-veering nor right-veering.
\end{proof}

Every $\mathrm{Spin}^c$ structure $\mathfrak s\in \mathrm{Spin}^c(Y^{\#})$ is the connected sum of a $\mathrm{Spin}^c$ structure over $Y$ and a $\mathrm{Spin}^c$ structure over $Z$. Let $p(\mathfrak s)$ be the $\mathrm{Spin}^c$ structure over $Z$.
Define
\begin{align*}
\mathfrak S^{\bullet}&=\{
\mathfrak s\in \mathrm{Spin}^c(Y^{\#})| p(\mathfrak s)=\mathfrak r
\},\\
\mathfrak  S^{!}&=\{
\mathfrak s\in \mathrm{Spin}^c(Y^{\#})| p(\mathfrak s)\ne\mathfrak r
\}.
\end{align*}

Now the chain complex $CFK^{\infty}(Y^{\#},K^{\#},[F^{\#}])$ is the direct sum of
two subcomplexes 
\begin{align*}
C^{\bullet}&=CFK^{\infty}(Y^{\#},K^{\#},\mathfrak S^{\bullet},[F^{\#}]),\\
C^{!}&=CFK^{\infty}(Y^{\#},K^{\#},\mathfrak S^{!},[F^{\#}]).
\end{align*}

Let $\widehat{F^{\#}}\subset (Y^{\#})_0(K^{\#})$ be the closed surface obtained by capping off $\partial F^{\#}$ with a disk.
Given $\mathfrak s\in \mathrm{Spin}^c(Y^{\#})$ and $k\in\mathbb Z$,
 let $\mathfrak t_k=\mathfrak t_k(\mathfrak s)\in\mathrm{Spin}^c((Y^{\#})_0(K^{\#}))$ be the Spin$^c$ structure satisfying
\[
\mathfrak t_k|(Y^{\#}\setminus (K^{\#}))=\mathfrak s|(Y^{\#}\setminus (K^{\#})),\quad \langle c_1(\mathfrak t_k),[\widehat{F^{\#}}]\rangle=2k.
\]
We will often consider $(Y^{\#})_n(K^{\#})$, the $n$--surgery on $K^{\#}$ for a given integer $n\gg0$. In this case,
let
$[\mathfrak t_k]$ be the set of all Spin$^c$ structures $\mathfrak t$ on $Y^{\#}_0(K^{\#})$ satisfying that 
\[
\mathfrak t|(Y^{\#}\setminus K^{\#})=\mathfrak s|(Y^{\#}\setminus K^{\#}),\quad \langle c_1(\mathfrak t)-c_1(\mathfrak t_k),\widehat{F^{\#}}\rangle=2nm,\quad \text{for some } m\in\mathbb Z.
\]
Define
\[
\mathfrak T^{\bullet}=\{\mathfrak t_k(\mathfrak s)|\mathfrak s\in\mathfrak S^{\bullet}\},\quad
[\mathfrak T^{\bullet}]=\cup_{\mathfrak s\in\mathfrak S^{\bullet}}[\mathfrak t_k(\mathfrak s)],
\]
and define $\mathfrak T^!,[\mathfrak T^!]$ similarly.

The following proposition is an analogue of \cite[Proposition~3.1]{NiFPFree}.

\begin{prop}\label{prop:ZeroSurg}
Under the assumption that $\varphi$ is not left-veering,
we have
\[
\mathrm{rank}\:HF^+(Y^{\#}_0(K^{\#}),\mathfrak T^{\bullet}_{g^{\#}-2})=\mathrm{rank}\widehat{HFK}(Y^{\#},K^{\#},\mathfrak S^{\bullet},[F],g^{\#}-1)-2.
\]
\end{prop}
\begin{proof}
In \cite[Section~9]{OSzAnn2},
Ozsv\'ath and Szab\'o proved an exact sequence 
\begin{equation}\label{eq:ExTriangle}
\cdots\to HF^+(Y^{\#}_0(K^{\#}),[\mathfrak t_k])\to HF^+(Y^{\#}_n(K^{\#}),\mathfrak s_k)\to HF^+(Y^{\#},\mathfrak s)\to\cdots,
\end{equation}
where $\mathfrak s\in \mathrm{Spin}^c(Y^{\#})$, $\mathfrak s_k$ is a certain Spin$^c$ structure on $Y^{\#}_n(K^{\#})$.
When $n\ge g^{\#}+|k|$, by the adjunction inequality \cite[Theorem~7.1]{OSzAnn2}, $\mathfrak t_k$ is the only Spin$^c$ structure in $[\mathfrak t_k]$ which possibly supports a nonzero summand of $HF^+(Y^{\#}_0(K^{\#}))$. Hence
\begin{equation}\label{eq:OnlySpinc}
HF^+(Y^{\#}_0(K^{\#}),[\mathfrak t_k])=HF^+(Y^{\#}_0(K^{\#}),\mathfrak t_k).
\end{equation}

For simplicity, we will write $C=C^{\bullet}=CFK^{\infty}(Y^{\#},K^{\#},\mathfrak S^{\bullet},[F^{\#}])$ for the rest of the proof.
It follows from \cite[Theorem 4.4]{OSzKnot} that when $n\gg0$, we have
\begin{equation}\label{eq:LargeN}
HF^+(Y^{\#}_n(K^{\#}),\mathfrak S^{\bullet}_k)\cong H_*(A_k^+):=H_*(C\{i\ge0\text{ or }j\ge k\}),
\end{equation} 
where $\mathfrak S^{\bullet}_k=\{\mathfrak s_k|\mathfrak s\in \mathfrak S^{\bullet}\}$.

Since $\mathfrak r$ is nontorsion in $S^1\times S^2$,
$HF^+(Y^{\#},\mathfrak S^{\bullet})=0$. Combining (\ref{eq:ExTriangle}), (\ref{eq:OnlySpinc}), and (\ref{eq:LargeN}),  we get
\[
HF^+(Y^{\#}_0(K^{\#}),\mathfrak T^{\bullet}_{g-2})\cong H_*(A_{g-2}^+).
\]

By the exact triangle
\[
\xymatrix{
H_*(A^+_{g-2})\ar[r]&H_*(C\{i\ge0\})\ar[ld]\\ 
H_*(C\{i<0, j\ge g-2\})\ar[u]&  
}
\]
and the fact that $H_*(C\{i\ge0\})\cong HF^+(Y^{\#},\mathfrak S^{\bullet})=0$, we get that
\[H_*(C\{i<0, j\ge g-2\})\cong H_*(A^+_{g-2}).\]
It follows that 
\[
HF^+(Y^{\#}_0(K^{\#}),\mathfrak T^{\bullet}_{g-2})\cong H_*(C\{i<0, j\ge g-2\}).
\]

Using Lemma~\ref{lem:NoVeering} and \cite[Lemma~3.2]{NiFPFree}, we get our conclusion.
\end{proof}

The following proposition was proved in \cite{BNS}. For the reader's convenience, we include a proof here.

\begin{prop}\label{prop:OtherSpinc}
Let $Y^{\#},K^{\#},F^{\#}$ be as before. Then 
\[
HF^+(Y^{\#}_0(K^{\#}),\mathfrak T^{!}_{g^{\#}-2})=0.
\]
\end{prop}
\begin{proof}
Using the exact triangle 
\[
\xymatrix{
HF^+(\cdot) \ar[rr]^U &&HF^+(\cdot)\ar[ld]\\
 &\widehat{HF}(\cdot)\ar[lu] &,
}
\]
we only need to prove $\widehat{HF}(Y^{\#}_0(K^{\#}),\mathfrak T^{!}_{g^{\#}-2})=0$. 

It follows from Lemma~\ref{lem:Cable} and the K\"unneth formula that
\begin{equation}\label{eq:ZeroOutside}
\widehat{HFK}(Y^{\#},K^{\#},\mathfrak S^{!},[F^{\#}],i)=0, \quad \text{whenever }i\ge g^{\#}-2.
\end{equation}

For simplicity, we will write $C=C^{!}=CFK^{\infty}(Y^{\#},K^{\#},\mathfrak S^{!},[F^{\#}])$ for the rest of the proof.
Let 
\[
\widehat{A}_k=C\{\max\{i,j-k\}=0\},\quad \widehat B=C\{i=0\}.
\]
There are chain maps
\[
\widehat{v}_{k},\widehat{h}_{k}:\widehat{A}_k\to \widehat B
\] 
defined in \cite{OSzIntSurg}. Here $\widehat{v}_{k}$ first projects $\widehat{A}_k$ into $C\{i=0,j\le k\}$, then sends $C\{i=0,j\le k\}$ into $\widehat B$ by the inclusion map. The map $\widehat{h}_{k}$ is defined similarly. It first projects $\widehat{A}_k$ into $C\{i\le 0,j=k\}$, then sends $C\{i\le 0,j= k\}$ into $C\{j= k\}$, finally sends $C\{j= k\}$ to $\widehat B$ by a chain homotopy equivalence.

It is proved in \cites{OSzIntSurg,NiPropG} that $\widehat{HF}(Y^{\#}_0(K^{\#}),\mathfrak T^!_{k})$ is isomorphic to the homology of the mapping cone of 
\[\widehat{v}_{k}+\widehat{h}_{k}:\widehat{A}_k\to \widehat B.\]

Using (\ref{eq:ZeroOutside}), we conclude that $H_*(C\{i\le0,j=g^{\#}-2\})=0$, so $(\widehat{h}_{g^{\#}-2})_*=0$.
Hence $\widehat{HF}(Y^{\#}_0(K^{\#}),\mathfrak T^!_{g^{\#}-2})$ is isomorphic to the homology of the mapping cone of $\widehat{v}_{g^{\#}-2}$.
Using (\ref{eq:ZeroOutside}) again, we see that $(\widehat{v}_{g^{\#}-2})_*$ is an isomorphism, so the homology of its mapping cone is $0$. This finishes the proof.
\end{proof}

\begin{proof}[Proof of Theorem~\ref{thm:FPbound}]
By the K\"unneth formula, 
\begin{eqnarray*}
&&\widehat{HFK}(Y^{\#},K^{\#},[F^{\#}], g^{\#}-1)\\
&\cong& \quad\widehat{HFK}(Y,K,[F], g-1)\otimes \widehat{HFK}(Z,\overline{L'},[\overline E], g(L'))\\
&&\bigoplus \widehat{HFK}(Y,K,[F], g)\otimes \widehat{HFK}(Z,\overline{L'},[\overline E], g(L')-1).
\end{eqnarray*}
Since $\widehat{HFK}(Y,K,[F], g)\cong\widehat{HFK}(Z,\overline{L'},[\overline E], g(L'))\cong\mathbb F$,
we have
\begin{eqnarray*}
&&\mathrm{rank}\widehat{HFK}(Y^{\#},K^{\#},[F^{\#}], g^{\#}-1)\\
&=&\mathrm{rank}\widehat{HFK}(Y,K,[F], g-1)+\mathrm{rank}\widehat{HFK}(Z,\overline{L'},[\overline E], g(L')-1)\\
&=&r+1.
\end{eqnarray*}
 It follows from Propositions~\ref{prop:ZeroSurg} and \ref{prop:OtherSpinc} that
\[
\mathrm{rank}HF^+((Y^{\#})_0(K^{\#}),[\widehat{F^{\#}}], g^{\#}-2)=r-1.
\]

The manifold $(Y^{\#})_0(K^{\#})$ is a surface bundle over $S^1$. Its fiber $\widehat{F^{\#}}$ is a closed surface which is the union of $F$ and $\overline E$.
Let $\widehat{\sigma}$ be the monodromy, then $\widehat{\sigma}|F=\varphi$. Let $[\widehat{\sigma}]$ be the mapping class of $\widehat{\sigma}$.

As argued in \cite[Theorem~3.5]{BHS}, using work of Lee--Taubes \cite{LeeTaubes}, Kutluhan--Lee--Taubes \cite{KLT}, Kronheimer--Mrowka \cite{KMBook}, one sees that \[HF^+((Y^{\#})_0(K^{\#}),[\widehat{F^{\#}}],g^{\#}-2)\cong HF^{symp}_*(\widehat{F^{\#}},[\widehat{\sigma}]).\] So 
\[
\mathrm{rank}HF^{symp}_*(\widehat{F^{\#}},[\widehat{\sigma}])=r-1.
\]

By Corollary~\ref{cor:SympFPbound}, we have $N(\widehat{\sigma})\le r-1$. Since $\widehat{\sigma}|_{\overline E}$ has no fixed point, and $F$ is an essential surface in $\widehat{F^{\#}}$,
\[N(\widehat{\sigma}|_{F})=N(\widehat{\sigma})\le r-1.\]
Now our conclusion follows from Theorem~\ref{thm:JG}.
\end{proof}


\end{document}